\documentclass[12pt,reqno]{amsart}
\usepackage{cite}
\usepackage{amssymb,latexsym,amsmath,amsfonts}
\usepackage{latexsym}
\usepackage{amsmath, amsthm, amscd, amsfonts, amssymb, graphicx, color}
\usepackage[mathscr]{eucal}
\voffset = -18pt \hoffset = -27pt \textwidth = 5.6in \textheight
= 8.9in
    %This allows you to use script-style capitals
\setlength{\textwidth}{144.7mm} \setlength{\textheight}{206.1mm}
%%%%%%%%%%%%%%%%%%%%%%%%%%%%%%%%%%%%%%%%%%%%%%%%%%%%%%%

%%%%%%%%%%%%%%%%%%%%%%%%%%%%%%%%%%%%%%%%%%%%%%%%%%%%%%%
\theoremstyle{plain}
\newtheorem{theorem}{\textbf{Theorem}}[section]
%or for the subordinate counting to a "section" (for example):
%\newtheorem{theorem}{Theorem}[section]

%or for a common counter with "theorem" (for example):
%\newtheorem{lemma}[theorem]{Lemma}

\newtheorem{problem}[theorem]{Open Problem}
%or for the subordinate counting to a "theorem":Erratum to: Relation-Theoretic Metrical Fixed Point Results via $w$-distance With  Applications
%\newtheorem{corol}{Corollary}[theorem]

\newtheorem{remark}[theorem]{\textbf{Remark}}

%.....etc

%\theoremstyle{Definition}
\newtheorem{definition}[theorem]{Definition}

\newtheorem{example}[theorem]{\textbf{Example}}

%a non-numbered form (use of *):
%\newtheorem*{name of counter}{name of structure in text}

%for counting of equations subordinate to "section" (for example)
\numberwithin{equation}{section}
\begin{document}

\title[Weak orthogonal metric spaces and fixed point results ]
{Weak orthogonal metric spaces and fixed point results }
\author[T. Senapati]%
{Tanusri Senapati}

%\thanks{}
\address{{$^{1}$\,} Tanusri Senapati,
                    Department of Mathematics,
                    Indian Institute of Technology Guwahati,
                    Assam,
                    India.}
                    \email{senapati.tanusri@gmail.com}

%\thanks{The online version of the original article can be found under  doi: 10.1007/s11784-017-0462-9}

 %\subjclass[2010]{ $47$H$10$, $54$H$25$. }
 %\keywords {Complete metric space, binary relation, $w$-distance, fixed point, nonlinear fractional differential equation, fractional thermostat model }

 \begin{abstract}
  In this article we extend the notion of orthogonal metric space to weak orthogonal metric space. Then we establish fixed point results for a mapping satisfying a more general contraction condition.
  Several nontrivial examples are given in support of our obtained results. Moreover, we are able to answer of the open question  posed by Eshaghi et al. [On orthogonal sets and Banach fixed point theorem, Fixed Point Theory, 18(2017), 569-578].
 \end{abstract}
 \maketitle
 
 \section{Introduction and Preliminaries}
 This section is a prelude which leads us into the main results. Here we present some basic definitions and results that are 
 prerequisite  for the  main results of this manuscript. We begin this section by recalling the definition of an orthogonal set.
 \begin{definition}{\rm \cite{gor}}
  Let $X$ be a non empty set and $\perp$ be a binary relation defined on $X\times X$. Then $(X,\perp)$ is said to be an orthogonal 
  set (briefly, $O$-set) if there exists $x_0\in X$ such that 
  \[(\forall y\in X, x_0\perp y)~\mbox{or}~(\forall y\in X, y\perp x_0).\]
  The element $x_0$ is called an orthogonal element. An orthogonal set may have more than one orthogonal element.
 \end{definition}
\begin{example}
Let $X$ be a normed linear space.  We define $x\perp y$ if $||x+\lambda y||\geq ||x||$ for all $\lambda\in \mathbb{C}$. Then for all $y\in X,$ there exists $x=\theta\in X$ such that $||x+\lambda y||\geq ||x||$ for all $\lambda\in \mathbb{C}$. This shows that $(X,\perp)$ is an orthogonal set. 
 \end{example}
 For more examples and properties of orthogonal sets and orthogonal metric spaces, the reader are refereed to see {\rm\cite{gor,bagh}}. Now we introduce the definition of a weak orthogonal set.
 \begin{definition}
  Let $X$ be a non empty set and $\perp$ be a binary relation defined on $X\times X$. Then $(X,\perp)$ is said to be a weak orthogonal  set (briefly, $O_w$-set) if there exists $x_0\in X$ such that $\forall y\in X,$
 \[ x_0\perp y~\mbox{or}~ y\perp x_0.\]
 \end{definition}
The element $x_0$ is called a weak orthogonal element. Likewise an orthogonal set, a weak orthogonal set has more than one weak orthogonal element.\\

Two elements $x,y\in X$ are said to be orthogonally related if $x\perp y$ or $y\perp x$.
\begin{remark}
 From the definition, it is clear that every orthogonal set is a weak orthogonal set but the converse is not true. The following examples show the a weak orthogonal set is not an orthogonal set.
\end{remark}
\begin{example}\label{exam1}
 Let us set $X=\mathbb{R}$ and we define a binary relation $\perp$ on $X$ by
 \[x\perp y ~\mbox{if} ~x\leq y.\]
 It is very easy to check that $\perp$ is a weak orthogonal relation but it is not an orthogonal relation.
 For all $x\in X$ with $x\geq 0$, we have $0\perp x$ and for all $x\leq 0$, we have $x\leq 0$. Hence, $(\mathbb{R},\perp)$ is a weak orthogonal set. 
 Note that this set is not an orthogonal set since there exists no element $x_0\in X$ such that  for all $x\in X$, $x_0\perp x$ or for all $x\in X$,  $x\perp x_0$ holds. Also note that every element in $X$ is a weak orthogonal element.
\end{example}
\begin{example}
 Let us consider the linear space $M_{n\times n}(\mathbb{R})$ and $S=\{A\in M_{n\times n}(\mathbb{R}):A\geq 0~\mbox{or}~ A\leq 0\}.
$ Now we define a binary relation $\perp$ on $S$ as $A\perp B$ if $A-B\geq 0$. Clearly for all positive semidefinite matrices $A\in S$, $A\perp 0$ 
and for all negetive semidefinite matrices $A\in S$,  $0\perp A$. Therefore $(S,\perp)$ is a weak orthogonal set. 
\end{example}
\begin{example}
 Let $H$ be a an infinite dimensional Hilbert space and $S=\{P, I+P: P ~\mbox{is~ an ~orthogonal~ projection~ operator}\}$. Now we define a binary
 relation $\perp$ on $S$ as $P_1\perp P_2$ if $P_1\geq P_2$. Therefore for all $P\in S$, we have either $P\perp I$ or $I\perp P$. Hence $(S,\perp)$ is a weak orthogonal set.
\end{example}
In the following lines, we extend the notions of orthogonal sequence and Cauchy orthogonal sequence  to weak orthogonal sequence and Cauchy weak orthogonal sequence respectively.
\begin{definition}
Let $(X,\perp)$ be  a weak orthogonal set (briefly, $O_w$-set). A sequence $\{x_n\}_{n\in \mathbb{N}} \in X$ is said to be a weak orthogonal sequence 
(briefly, $O_w$-sequence) if $\forall n\in \mathbb{N},$
\[ x_n\perp x_{n+1}~\mbox{or}~x_{n+1}\perp x_{n}.\]
Similarly, a Cauchy sequence $\{x_n\}_{n\in \mathbb{N}}$ in $X$ is said to be a Cauchy weak orthogonal sequence (briefly, Cauchy $O_w$-sequence) if  $\forall n\in \mathbb{N},$
 \[ x_n\perp x_{n+1}~\mbox{or}~x_{n+1}\perp x_{n}.\]
\end{definition}
\begin{remark}
 Every orthogonal sequence is a weak orthogonal sequence but the converse is not true. 
 \end{remark}
\begin{example}
 Let us consider the weak orthogonal set in Example \ref{exam1}. We consider a sequence  $\{x_n\}_{n\in \mathbb{N}}\in X$ by $x_n=(-1)^n\frac{1}{n}$ 
 for all $n\in \mathbb{N}$. Clearly, for all $m\in \mathbb{N}$ with $n=2m+1, x_n\perp x_{n+1}$ and $n=2m, x_{n+1}\perp x_n$. This shows 
 that $\{x_n\}_{n\in \mathbb{N}}$ is a weak orthogonal sequence but not an orthogonal sequence.  
\end{example}
\begin{definition}
 A weak orthogonal metric space $(X,\perp, d)$ is said to be a complete weak orthogonal metric space (briefly, $O_w$-complete) if every Cauchy $O_w$-sequence converges in $X$.
\end{definition}

\begin{definition}
 A self map $T$ on a weak orthogonal metric space $(X,\perp,d)$ is said to be weak orthogonality preserving (briefly, $O_w$-preserving)
 if $x\perp y\Rightarrow Tx\perp Ty ~\mbox{or}~Ty\perp Tx$ for all $x,y\in X$.
\end{definition}
The authors of \cite{gor} defined $O$-continuity and Banach $\perp$-contraction as follows:
\begin{definition}
 Let $(X,\perp,d)$ be an orthogonal metric space. A function $T:X\to X$ is said to be orthogonally continuous ($O$-continuous)
at $x$ if for each $O$-sequence $\{x_n\}_{n\in \mathbb{N}}$ converging to $x$ implies that $T(x_n)\to Tx$ as $n\to \infty$.
 \end{definition}
\begin{definition}
 Let $(X,\perp,d)$ be an orthogonal metric space. A function $T:X\to X$ is said to be an orthogonal Banach contraction (briefly, Banach $\perp$-contraction)
if \[d(Tx,Ty)\leq k d(x,y)\] for all $x,y\in X$ with $x\perp y$.
\end{definition}
Here we would like to draw the reader's attention to a basic difference between the Banach contraction in metric spaces and orthogonal Banach contraction in 
orthogonal metric spaces. It is very well known that in metric space, every Banach contraction mapping is continuous mapping. But in orthogonal metric space, Banach $\perp$-contraction 
does not give the guarantee of orthogonal continuity of the mapping. In this regard, we present the following simple example.
\begin{example}
 We consider the orthogonal metric space $(X,\perp,d)$ where $X=\mathbb{R}$ and $$x\perp y~\mbox{if}~xy\in \mathbb{Q}.$$
 Therefore, for all $x\in X$, there exists $0\in \mathbb{R}$ such that $0\perp x$ and hence, $(X,\perp, d)$ is an orthogonal set. We define a 
 mapping $T:X\to X$ by \\
 \[ T(x) = \left\{ \begin{array}{ll}
0, & { x\in \mathbb{Q}^c};\\
 \frac{x}{3}, & {otherwise}.\end{array} \right. \]
 
 At first we show that $T$ is a Banach $\perp$-contraction. For all nonzero $x,y\in X$ with $x\perp y$ implies either $x,y\in \mathbb{Q}$ or $x,y\in \mathbb{Q}^c$ which implies that \[d(Tx,Ty)=\frac{x-y}{3}\leq \frac{1}{3}d(x,y)\] or
 \[d(Tx,Ty)=0\leq kd(x,y),\forall k\in [0,1).\]  Let $x= 0$ and $y\in \mathbb{R}$ be a nonzero number. Then it is easy to check that $d(Tx,Ty)\leq kd(x,y)$ for some $k\in [0,1)$. Therefore $T$ is a Banach $\perp$-contraction. Note that $T$ is not a Banach contraction. 
For example, let $x=1$ and $y=1+\frac{1}{\sqrt{11}}$. Then there exists no $k\in [0,1)$  such that \[d(Tx,Ty)=\frac{1}{3}\leq k d(x,y)\] holds.
Next, we claim that the mapping $T$ is not $O$-continuous. To show this, we consider the sequence $\{x_n\}_{n\in \mathbb{N}}$ in $X$ where $x_n=1+\frac{1}{1!}+
\frac{1}{2!}+\dots+\frac{1}{n!}$, for each $n\in \mathbb{N}$. Clearly, $\{x_n\}_{n\in \mathbb{N}}$ is an orthogonal sequence converging to $e$. It is easy to check that 
$T{x_n}\to \frac{e}{3} \neq T(e)=0$, i.e., $T$ is not $O$-continuous.
 \end{example}
Therefore to establish fixed point results in orthogonal metric space, we need to assume the condition of $O$-continuity of the mapping 
which is already defined in \cite{gor}. Now we are interested to extend the idea of $O$-continuity to orbitally $O$-continuiuty and then orbitally weak $O$-continuity.

By the notation $O_T(x)$, we define the orbit of $T$ at $x\in X$, i.e.,
\[O_T(x)=\{ T^nx:n=0,1,2,\dots\}.\]
\begin{definition}
 Let $(X,\perp,d)$ be an $O$-metric space and $T$ be a self mapping on $X$. Then $T$ is said to be orbitally $O$-continuous at $z$ if every $O$-sequence $\{y_n\}_{n\in \mathbb{N}}\in O_T(x)$, 
 for any $x\in X$, \[y_n\to z\implies Ty_n\to Tz.\]
\end{definition}
\begin{definition}
 Let $(X,\perp,d)$ be an $O$-metric space and $T$ be a self mapping on $X$. Then $X$ is said to be $T$-orbitally $O$-complete if every Cauchy $O$-sequence $\{y_n\}_{n\in \mathbb{N}}\in O_T(x)$, 
 for any $x\in X$, converges in $X$.
\end{definition}
\begin{example}
Let $X=(0,\infty)$  and we define $x\perp y$ if $xy\leq x~\mbox{or}~y$. Then for all $y\in X$, there exists $x=1$, such that $xy\leq y$. So, $(X,\perp)$ is an $O$-set. We consider the usual metric $d$ on $X$. Then $(X,\perp,d)$ is an $O$-metric space. Let $T:X\to X$ be defined as
 \[ T(x) = \left\{ \begin{array}{ll}
2, & { x\in (0,1)};\\
1, & { x=1};\\
 \frac{1}{3}, & {otherwise}.\end{array} \right. \]
Here, we claim that 
\begin{enumerate}
\item[(A)] The space $X$ is a $T$-orbitally $O$-complete metric space but not $O$-complete.
\item[(B)] The function $T$ is orbitally $O$-continuous but not $O$-continuous.
\end{enumerate} 
\begin{proof}
\noindent{\textbf{(A)}} To prove this, we consider the following cases:

\noindent{\textbf{Case-I:}} Let us consider $x\in (0,1)$. Then 
\begin{eqnarray*}
O_T(x)&=&\{T^nx:n=0,1,2,\dots\}\\
&=&\{x,2,\frac{1}{3},2,\frac{1}{3},\dots\}.
\end{eqnarray*}
Similarly for $x>1$, \[O_T(x)=\{x,,\frac{1}{3},2,\frac{1}{3},2,\dots\}.\]
Therefore for all $x\in (0,1)\vee (1,\infty), O_T(x)$ contains two subsequences. Subsequence $\{y_n\}_{n\in \mathbb{N}}=\{\frac{1}{3}\}, n\in \mathbb{N}$ is only Cauchy $O$-sequence which converges in $X$.

\noindent{\textbf{Case-II:}} For $x=1, O_T(x)=\{1,1,1,\dots\}$ contains a constant sequence which is Cauchy $O$-sequence.

The above two cases deduce that $(X,\perp,d)$ is a $T$-orbitally $O$-complete metric space. Let us consider a sequence $\{x_n\}_{n\in \mathbb{N}}$ in $X$ such that $x_n=\frac{1}{n}$ for all $n\in \mathbb{N}$. 
Clearly this sequence is Cauchy $O$-sequence but it is not convergent in $X$. Therefore, $(X,\perp,d)$ is not $O$-complete.

\noindent{\textbf{(B)}}
 We consider a sequence $\{x_n\}_{n\in \mathbb{N}}$ in $X$ such that $x_n=1-\frac{1}{n+1}$ for all $n\in \mathbb{N}$. Clearly this sequence is $O$-sequence and convergent to 1.
 For all $n\in \mathbb{N}$, $T{x_n}=2$ and $T1=1$ which implies that $T$ is not an $O$-continuous function. It is easy to check that $T$ is orbitally $O$-continuous function.
\end{proof}
\end{example}
Subsequently, we define the followings:
\begin{definition}
 Let $(X,\perp,d)$ be a $O_w$-metric space and $T$ be a self mapping on $X$. Then $T$ is said to be orbitally $O_w$-continuous at $z$ 
 if every $O_w$-sequence $\{x_n\}_{n\in \mathbb{N}}\in O_T(x)$, for any $x\in X$, \[y_n\to z\implies Ty_n\to Tz.\]
\end{definition}
\begin{definition}
 Let $(X,\perp,d)$ be a $O_w$-metric space and $T$ be a self mapping on $X$. Then $X$ is said to be $T$-orbitally $O_w$-complete if every Cauchy $O_w$-sequence $\{y_n\}_{n\in \mathbb{N}}\in O_T(x)$, 
 for any $x\in X$, converges in $X$.
\end{definition}

\section{main results}
This section comes up with the definition of generalized orthogonal contraction in weak orthogonal metric space and it presents a fixed point result concerning the maps.
\begin{definition}
Let $(X,\perp,d)$ be an $O_w$-metric space and $T$  be a self map on $X$. Then $T$ is said to be a generalized $\perp$-contraction if  \[d(Tx,Ty) \leq k M(x,y)\] for all orthogonally related elements $x,y\in X$ and
\begin{eqnarray*}
M(x,y)&=& \max\{d(x,y),d(x,Tx),d(y,Ty),\frac{d(x,Ty)+d(Tx,y)}{2},\\
&& \frac{d(T^2x,x)+d(T^2x,Ty)}{2},d(T^2x,Tx),d(T^2x,y),d(T^2x,Ty)\}.
\end{eqnarray*}
\end{definition}
\begin{theorem}\label{thm1}
Let $T$ be a self map on a weak orthogonal metric space $(X,\perp,d)$ and $X$ be a $T$-orbitally $O_w$-complete space. If $T$ is weak $\perp$-preserving, orbitally $O_w$-continuous and generalized $\perp$-contraction for some $k\in [0,1)$, then $T$ has a unique fixed point.
\end{theorem}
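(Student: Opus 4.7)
The plan is to follow the classical Picard iteration strategy, adapted to the weak orthogonal setting. Since $(X,\perp)$ is a weak orthogonal set, I begin by fixing a weak orthogonal element $x_0 \in X$, so that $x_0 \perp y$ or $y \perp x_0$ for every $y \in X$, and defining the iterates $x_n := T^n x_0$. First I would verify that $\{x_n\}$ is a weak $O_w$-sequence lying in the orbit $O_T(x_0)$: the defining property of $x_0$ ensures that $x_0$ and $x_1 = T x_0$ are orthogonally related, and the $O_w$-preserving property of $T$ propagates this through all consecutive pairs by induction.

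The core technical step is to show that $\{x_n\}$ is Cauchy. Applying the generalized $\perp$-contraction to the orthogonally related pair $(x_{n-1}, x_n)$, I would bound the eight candidate terms of $M(x_{n-1}, x_n)$ using $T x_{n-1} = x_n$ and $T^2 x_{n-1} = x_{n+1}$. Several terms collapse to $d(x_{n-1}, x_n)$, $d(x_n, x_{n+1})$, or $0$, while the averaged terms reduce to $\tfrac{d(x_{n-1},x_{n+1})}{2}$, which by the triangle inequality is at most $\max\{d(x_{n-1},x_n), d(x_n, x_{n+1})\}$. Hence
\[
M(x_{n-1}, x_n) \le \max\{d(x_{n-1}, x_n),\, d(x_n, x_{n+1})\}.
\]
Because $k<1$, this forces $d(x_n, x_{n+1}) \le k\, d(x_{n-1}, x_n)$ and so $d(x_n, x_{n+1}) \le k^n\, d(x_0, x_1)$, whence a routine triangle-inequality telescoping yields the Cauchy property.

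Since $\{x_n\} \subset O_T(x_0)$ is a Cauchy $O_w$-sequence and $X$ is $T$-orbitally $O_w$-complete, there exists $z \in X$ with $x_n \to z$. Orbital $O_w$-continuity of $T$ at $z$ then gives $T x_n \to T z$; but $T x_n = x_{n+1}$ also converges to $z$, so $T z = z$.

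For uniqueness, let $w$ be any fixed point. The weak orthogonal property of $x_0$ makes $x_0$ and $w$ orthogonally related, and $O_w$-preservation together with $w = T^n w$ propagates this to every pair $(x_n, w)$. Expanding $M(x_n, w)$ and letting $n \to \infty$, each of the eight terms tends to $d(z,w)$, to $\tfrac{d(z,w)}{2}$, or to $0$, so $M(x_n,w) \to d(z,w)$; passing to the limit in $d(x_{n+1}, w) \le k\, M(x_n, w)$ produces $d(z,w) \le k\, d(z,w)$, hence $z = w$. The main obstacle I anticipate is the careful bookkeeping in these $M$-estimates: with eight candidate terms, one must check that none of them introduces a quantity too large for the geometric decay (in the Cauchy step) or for the limit contraction (in the uniqueness step) to absorb. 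Everything else is standard.
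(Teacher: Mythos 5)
Your proposal is correct and follows essentially the same route as the paper: Picard iteration from a weak orthogonal element, the bound $M(x_{n-1},x_n)\le\max\{d(x_{n-1},x_n),d(x_n,x_{n+1})\}$ yielding $d(x_n,x_{n+1})\le k^n d(x_0,x_1)$ and the Cauchy property, $T$-orbital $O_w$-completeness plus orbital $O_w$-continuity for existence of the fixed point, and propagation of orthogonal relatedness from $(x_0,w)$ to $(x_n,w)$ for uniqueness. The only difference is in the uniqueness step, where the paper runs a case analysis on which term of $M(x_{n-1},w)$ attains the maximum while you pass directly to the limit in $d(x_{n+1},w)\le k\,M(x_n,w)$ to get $d(z,w)\le k\,d(z,w)$; this is a slightly cleaner route to the same conclusion.
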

\begin{proof}
Since, $X$ is a weak orthogonal set, there exists at least one element $x_0\in X$ such that \[\forall y\in X, (x_0\perp y~\mbox{or}~y\perp x_0).\]
This implies that $x_0\perp Tx_0$ or $Tx_0\perp x_0$. Let us consider the iterated sequence $\{x_n\}_{n\in \mathbb{N}}$ where $x_n=T^nx_0$ for all $n\in \mathbb{N}$. Since $T$ is a weak $\perp$- preserving map, we must have either $T^nx_0\perp T^{n+1}x_0$ or   $T^{n+1}x_0\perp T^{n}x_0$ for all $n\in \mathbb{N}$, i.e.,$\{x_n\}_{n\in \mathbb{N}}$ is a weak $O$-sequence. Now we obtain
\begin{eqnarray*}
d(x_{n+1},x_{n+2}) &\leq & k M(x_{n},x_{n+1})\\
&\leq & k \max \big\{d(x_{n},x_{n+1}),d(x_{n},x_{n+1}),d(x_{n+1},x_{n+2}),\\
&& \frac{d(x_{n},x_{n+2})+d(x_{n+1},x_{n+1})}{2},\frac{d(x_{n+2},x_{n})+d(x_{n+2},x_{n+2})}{2},\\
&&d(x_{n+2},x_{n+1}),d(x_{n+2},x_{n+1}),d(x_{n+2},x_{n+2})\big\}\\
&\leq & \max\{d(x_{n},x_{n+1}),d(x_{n+1},x_{n+2})\}.
\end{eqnarray*}
Therefoe we must have \[d(x_{n+1},x_{n+2})\leq k d(x_{n},x_{n+1})\] for all $n\in \mathbb{N}$. Hence, one can obtain \[d(x_{n},x_{n+1})\leq k^nd(x_0,x_1)\] and $d(x_{n},x_{n+1})\to 0$ as $n\to \infty$. Next, we show that  $\{x_n\}_{n\in \mathbb{N}}$ is a Cauchy $O_w$-sequence. For all $m>n$,
\begin{eqnarray*}
d(x_n,x_m)&\leq & d(x_n,x_{n+1})+d(x_{n+1},x_{n+2})+\dots+d(x_{m-1},x_{m})\\
&\leq & \frac{k^n}{1-k}d(x_0,x_1) \to 0~\mbox{as}~n\to \infty.
\end{eqnarray*}
This shows that $\{x_n\}_{n\in \mathbb{N}}$ is a Cauchy $O_w$-sequence. Since, the space $X$ is $T$-orbitally $O_w$-complete, there exists some $z\in X$ such that $x_n\to z$ as $n\to \infty$. We claim that $z$ is a fixed point of $T$.

Given that $T$ is orbitally $O_w$-continuous function, i.e., for every weak $O$-sequence $\{x_n\}_{n\in \mathbb{N}}\in O_T(x_0)$ converging to $y$, we have $Ty_n\to Ty$ as $n\to \infty$. Since the sequence $\{x_n\}_{n\in \mathbb{N}}$  is itself a $O_w$-sequence converging to $z$, we must have that $Tx_n\to Tz$ as $n\to \infty$. Therefore, $Tz=\lim_{n\to \infty}Tx_n=\lim_{n\to \infty}x_{n+1}=z$, i.e., $z$ is a fixed point of $T$.

Finally, we prove the uniqueness of fixed points.  Let us consider $w$ is an another fixed point of $T$. Then we have either $x_0\perp w$ or $w\perp x_0$. As $T$ is a weak orthogonality preserving mapping, for all $n\in \mathbb{N},   x_n\perp w$ or $w\perp x_n.$ Then 
\begin{eqnarray*}
d(x_n,w)&=&d(Tx_{n-1},Tw)\\
&\leq & k M(x_{n-1},w)\\
& \leq & k \max \big\{d(x_{n-1},w),d(w,w),d(x_{n-1},x_n),\\
&& \frac{d(x_{n-1},w)+d(x_n,w)}{2},\frac{d(x_{n+1},x_{n-1})+d(x_{n+1},w)}{2},\\
&& d(x_{n+1},x_n), d(x_{n+1},w), d(x_{n+1},w)\big\}\\
&\leq & k\max \{d(x_{n-1},w), d(x_n,w),d(x_{n+1},w),d(x_{n-1},x_n),\\
&&d(x_{n-1},x_{n+1}),d(x_{n},x_{n+1})\}.
\end{eqnarray*}
Now we consider the following possibilities:
\begin{enumerate}
\item[(1)] Let $M(x_{n-1},w)=d(x_n,x_{n-1})$ or $d(x_n,x_{n+1}) $ or$d(x_{n-1},x_{n+1})$. Then one can immediately check that $d(x_n,w)\to  0$ as $n\to \infty$, i.e., the sequence $\{x_n\}_{n\in \mathbb{N}}$ converges to $w$.
\item[(2)]$M(x_{n-1},w)\neq d(x_{n},w)$  otherwise it leads to a contradiction as $k\in [0,1)$.
\item[(3)] Let $M(x_{n-1},w)=d(x_{n+1},w)$. Then we have 
\begin{eqnarray*}
d(x_n,w)&\leq & kd(x_{n+1},w)\leq k[d(x_{n+1},x_n)+d(x_n,w)]\\
d(x_n,w)&\leq & \frac{ k}{1-k}d(x_{n+1},x_n).
\end{eqnarray*}Passing through the limit $n\to \infty$ in the above inequality, we get $d(x_n,w)\to 0$, i.e., $\{x_n\}$ converges to $w$.
\item[(4)] Let $M(x_{n-1},w)=d(x_{n-1},w)$, so we obtain
\[d(x_n,w)\leq k d(x_{n-1},w).\]
Repeating the above process in a similar manner, we have
\begin{eqnarray*}
d(x_{n-1,w})&\leq &\frac{k}{1-k} d(x_{n},x_{n-1})\\
\Rightarrow d(x_n,w) &\leq &\frac{k^2}{1-k}d(x_{n},x_{n-1}) \to 0~\mbox{as}~n\to \infty
\end{eqnarray*}
or
\begin{eqnarray*}
d(x_{n-1},w) &\leq  & k d(x_{n-2},w)\\
\Rightarrow d(x_n,w) &\leq & k^2d(x_{n-2},w).
\end{eqnarray*}
Thus by routine calculation one cen observe that
\[d(x_n,w)\leq{k^n}d(x_{0},w) \] or \[d(x_n,w)\leq \frac{k^n}{1-k}d(x_{1},x_{0}).\] Passing through the limit $n\to \infty$ in the both cases, we obtain $x_n\to w$.
\end{enumerate}
Therefore we observe that the sequence $\{x_n\}_{n\in \mathbb{N}}$ converges to $w$. Since the limit of a sequence is unique, we must have that $z=w$.
\end{proof}
The existence of a fixed point of the mapping $T$ in the above theorem can be established under the following condition instead of orbitally $O_w$-continuity of $T$.\\

\noindent {\textbf{(O1)}} Suppose $\{x_n\}_{n\in \mathbb{N}}$ is a $O_w$-sequence in $O_T(x)$, for some $x\in X$, converging to $z$. Then $\{x_n\}_{n\in \mathbb{N}}$ has a subsequence $\{x_{n_k}\}_{k\in \mathbb{N}}$ such that  $\forall k\in \mathbb{N}$, \[x_{n_k}\perp z~\mbox{or}~ z \perp  x_{n_k}.\]

\begin{theorem}
Let $T$ be a self map on a weak orthogonal metric space $(x,\perp,d)$ and $X$ be $T$-orbitally $O_w$-complete. If $T$ is a weak orthogonally preserving, generalized $\perp$-contraction for some $k\in [0,1)$ and satisfies condition $(O1)$ then $T$ has a  unique fixed point.
\end{theorem}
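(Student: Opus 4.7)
The plan is to mirror the proof of Theorem \ref{thm1} as far as the existence of a Cauchy limit, and then to replace the use of orbital $O_w$-continuity by an argument based on condition $(O1)$.

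First I would start from a weak orthogonal element $x_0\in X$ and form the Picard iterates $x_n=T^nx_0$. Since $T$ is weak orthogonality preserving, $\{x_n\}$ is a weak orthogonal sequence lying in $O_T(x_0)$, and the contraction argument of Theorem \ref{thm1}, applied term by term to $M(x_n,x_{n+1})$, still yields $d(x_n,x_{n+1})\leq k^n d(x_0,x_1)$, so $\{x_n\}$ is a Cauchy $O_w$-sequence in $O_T(x_0)$. By $T$-orbital $O_w$-completeness, $x_n\to z$ for some $z\in X$.

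The decisive step, replacing orbital $O_w$-continuity, is to show $Tz=z$. By $(O1)$, pass to a subsequence $\{x_{n_k}\}$ with $x_{n_k}\perp z$ or $z\perp x_{n_k}$ for every $k$. In either case $x_{n_k}$ and $z$ are orthogonally related, so the generalized $\perp$-contraction gives
\[
d(z,Tz)\leq d(z,x_{n_k+1})+d(Tx_{n_k},Tz)\leq d(z,x_{n_k+1})+kM(x_{n_k},z).
\]
Now I would examine $M(x_{n_k},z)$ term by term: the entries $d(x_{n_k},z)$, $d(x_{n_k},x_{n_k+1})$, $d(x_{n_k+2},x_{n_k+1})$ and $d(x_{n_k+2},z)$ tend to $0$; the averaged entries $\tfrac{1}{2}[d(x_{n_k},Tz)+d(x_{n_k+1},z)]$ and $\tfrac{1}{2}[d(x_{n_k+2},x_{n_k})+d(x_{n_k+2},Tz)]$ tend to $\tfrac{1}{2}d(z,Tz)$; while $d(z,Tz)$ and $d(x_{n_k+2},Tz)$ tend to $d(z,Tz)$. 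Hence $\limsup_k M(x_{n_k},z)=d(z,Tz)$, and letting $k\to\infty$ in the displayed inequality yields $d(z,Tz)\leq k\,d(z,Tz)$. Since $k\in[0,1)$, this forces $Tz=z$.

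For uniqueness, I would argue exactly as in Theorem \ref{thm1}: if $w$ is another fixed point then $x_0$ and $w$ are orthogonally related, so by weak $\perp$-preservation every $x_n$ and $w$ are orthogonally related, and the same case analysis on which of the eight quantities realises $M(x_{n-1},w)$ shows $x_n\to w$, whence $z=w$ by uniqueness of limits. The main obstacle I anticipate is the bookkeeping in the limit analysis of $M(x_{n_k},z)$ — in particular making sure that no term in the max grows large enough to exceed $d(z,Tz)$ in the limit, so that the contraction inequality can be strict enough to collapse $d(z,Tz)$ to zero. Everything else is a direct transcription of the argument for Theorem \ref{thm1}.
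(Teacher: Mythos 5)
Your proposal is correct and follows essentially the same route as the paper: build the Picard $O_w$-sequence, use $T$-orbital $O_w$-completeness to get a limit $z$, invoke $(O1)$ to obtain a subsequence orthogonally related to $z$, and pass to the limit in $d(Tx_{n_k},Tz)\leq kM(x_{n_k},z)$ to force $d(z,Tz)\leq k\,d(z,Tz)$, with uniqueness exactly as in Theorem \ref{thm1}. The only cosmetic difference is that you argue directly via the triangle inequality while the paper frames the same limit computation as a contradiction with $\lambda=d(z,Tz)>0$.
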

\begin{proof}Continuing in a similar fashion of the proof of the above theorem, let us consider that the Cauchy $O_w$-sequence converges to $z$. We prove that $z$ is a fixed point of $T$. If possible, let $z$ be not a fixed point of $T$. Then we must have that $d(z,Tz)=\lambda>0$. By the property $(O1), x_{n_k}\perp z$ or $z\perp x_{n_k}$ for all $k\in \mathbb{N}$ which implies that $Tx_{n_k}\perp Tz$ or $Tz\perp Tx_{n_k}$ for all $k\in \mathbb{N}$. By using the generalized contraction of $T$, we deduce that
\begin{eqnarray*}
d(Tx_{n_k},Tz)& \leq & k M(x_{n_k}, z)\\
&\leq & k \max \big\{d(x_{n_k},z),d(x_{n_k},x_{n_k+1}),d(z,Tz),\\
&& \frac{d(x_{n_k},Tz)+d(x_{n_k+1},z)}{2}, \frac{d(x_{n_k+2},x_{n_k})+d(x_{n_k+2},Tz)}{2},\\
&& d(x_{n_k+2},x_{n_k+1}),d(x_{n_k+2},z),d(x_{n_k+2},Tz) \big\}.
\end{eqnarray*}
Passing through the limit $n\to \infty$ in the both sides of the inequality, we get 
\begin{eqnarray*}
\lim_{n\to \infty}d(Tx_{n_k}, Tz) &\leq & k d(z,Tz)=k\lambda< \lambda\\
\implies d(z,Tz)& \leq & \lambda, ~\mbox{a~contradiction.}
\end{eqnarray*}
Therefore, we must have that $z$ is a fixed point of $T$. The uniqueness of fixed point can be proved in a similar way of Theorem \ref{thm1}.
\end{proof}

\begin{remark}
It is worth to note that the contraction condition which we consider here  is more general than the contraction condition due to \'Ciri\'c \cite{ciric}. Therefore, one can easily access the fixed point result for the mapping satisfying \'Ciri\'c contraction condition from our results in weak orthogonal metric space. Also, we can obtain fixed point results for the mappings satisfying Kannan contraction \cite{kan} and Chatterjea contraction  \cite{chat} from the above results in this structure.  
\end{remark}

In support of our main result, we present the following example.
\begin{example}
Let us set $X=\{0,1,2,3,4\}$ and consider an arbitrary binary relation $\mathcal{R}$ on $X$ as 
\[\mathcal{R}=\{(0,0),(1,0),(0,2),(3,4),(3,0),(4,0)\}.\]
For any two elements $x,y\in X, x\perp y$ if $(x,y)\in \mathcal{R}$. Clearly, $(X,\perp)$ is not an orthogonal set but it is a weak orthogonal set as for all $x\in X$, there exists $y=0$ such that $(0,x)\in \mathcal{R}$ or  $(x,0)\in \mathcal{R}.$\\
We define a mapping $T:X\to X$ by \[T0=0,T1=0, T2=1,T3=0,T4=2.\]
Except $(x,y)=(4,0)$, for all $(x,y)\in \mathcal{R},(Tx,Ty)\in \mathcal{R}.$ Observe that $(T4,T0)=(2,0) \notin \mathcal{R}$ but $(0,2)\in \mathcal{R}$. This shows that $T$ is a weak $\perp$-preseving mapping.\\
Next we check the contraction condition. Note that for all $x,y$ with $x\perp y$, we have $d(Tx,Ty)\leq k d(x,y)$ for some $k\in [0,1)$ except the point $(3,4)$. We show that $T$ satisfies the generalized contraction condition. For $(x,y)=(3,4)$,
\begin{eqnarray*}
M(3,4) &=& \max\{d(3,4), d(3,T3),d(4,T4),\frac{d(3,T4)+d(T3,4)}{2},\\
&&\frac{d(T^23,3)+d(T^23,T4)}{2},d(T^23,T3),d(T^23,4),d(T^23,T4)\}\\
&=& \max\{d(3,4),d(3,0),d(2,4), \frac{d(3,2)+d(0,4)}{2}, \frac{d(0,3)+d(0,2)}{2},d(0,4),d(0,2)\}\\
&=&4.
\end{eqnarray*} Therefore, for all $x,y\in X$ with $x\perp y$, we have \[d(Tx,Ty)\leq kM(x,y),\] i.e., $T$ is a generalized $\perp$-contraction mapping.
Now,
$$O_T(0)=\{0,0,0,\dots\},$$
$$O_T(1)=\{1,0,0,\dots\},$$
$$O_T(2)=\{2,1,0,0,\dots\},$$
$$O_T(3)=\{3,0,0,\dots\},$$
$$O_T(4)=\{4,2,1,0,0,\dots\}.$$
Observe that for all $x\in X, O_T(x)$ contains a constant sequence. This implies that $X$ is $T$-orbitally $O_w$-complete metric space and $T$ is also $O_w$-continuous map. All the conditions of our theorem are satisfied. Here, $x=0$ is the unique fixed point of $T$.
\end{example}
\section{Answer to the question posed in \cite{gor}}
We have already mentioned that the authors of \cite{gor} defined the concept of $O$-continuity. They proved that every continuous function is $O$-continuous but the reverse implication does not hold in general. In that connection, they raised the following question on the inner product spaces.
\begin{problem}
Let $X$ be an inner product space with the inner product $\langle.,.\rangle$. We define an orthogonal relation $\perp$  on $X$ as $x\perp y$ if $\langle x,y\rangle=0.$ Let $f:X\to X$ be a $O$-continuous function. Is $f$ continuous?
\end{problem}
The authors of \cite{sena} tried to answer of this question and claimed that in an inner product space, every $O$-continuous function is continuous. Here we  reinvestigate that problem and observe that their claim was not right, i.e.,  
there may exists an $O$-continuous function which is not necessarily continuous  in  inner product spaces.. In this purpose, we construct the following example of an $O$-continuous function  in the standard inner product space $\mathbb{R}^2$ which is not continuous.
\begin{example}
Let $\big(X,\langle.,.\rangle\big)$ be a standard inner product space, where $X=\mathbb{R}^2$ and $\langle x,y\rangle=x_1y_1+x_2y_2$, for all $x=(x_1,x_2),y=(y_1,y_2)\in X$. An orthogonal relation on $X$ is defined as
\[x\perp y~\mbox{if}~\langle x,y\rangle=0.\] 
Clearly, $(X,\perp)$ is an orthogonal set as for all $x\in X, \langle \theta,x\rangle=0$ where $\theta=(0,0)$.\\
Let us define a function $F:X \to X$ by 
\[ F(x_1,x_2) = \left\{ \begin{array}{ll}
\big(\frac{x_1x_2}{x_1^2+x_2^2},0), & { (x_1,x_2)=(\frac{1}{n},\frac{1}{n+1}),n\in \mathbb{N}};\\
 (0,0), & {otherwise}.\end{array} \right. \]

We prove that this function is $O$-continuous  at $\theta=(0,0)$ but not continuous at that point. Before showing that, we claim the followings:
\begin{enumerate}
\item[(A)] there exists no orthogonal sequence $\{x^{(n)}_{n\in \mathbb{N}}\}$ such that $x^(n)=(\frac{1}{i+n},\frac{1}{i+n+1})$ for some $i\in \mathbb{N}_0$ and all $n\in \mathbb{N}$,
\item[(B)] for any $k\in \mathbb{N}, (\frac{1}{k},\frac{1}{k+1})$ can not be a limit point of any orthogonal sequence.
\end{enumerate}
 \begin{proof}\noindent{\textbf{(A)}} Let us consider there exists an $O$-sequence $\{x^{(n)}\}_{n\in \mathbb{N}}$   such that $x^{(n)}=(\frac{1}{i+n},\frac{1}{i+n+1})$ for some $i\in \mathbb{N}_0$ and all $n\in \mathbb{N}$. Then $\langle x^{(n)},x^{(n+1)}\rangle=\frac{1}{i+n}\frac{1}{i+n+1}+\frac{1}{i+n+1}\frac{1}{i+n+2}\neq 0$ for all $i_0\in \mathbb{N}_0$ and $n\in \mathbb{N}$ which contradicts that $\{x^{(n)}\}_{n\in \mathbb{N}}$ is an orthogonal sequence.\\
\noindent{\textbf{(B)}} If possible let $(\frac{1}{k},\frac{1}{k+1})$, for some $k\in \mathbb{N}$, be a limit point of an orthogonal sequence $\{x^{(n)}\}_{n\in \mathbb{N}}$. Let us choose a number $\epsilon>0$ such that $\epsilon<\frac{1}{k+1}$. Then for every such choice of $\epsilon>0$, we can found $n_0\in \mathbb{N}$ such that $x_1^{(n)}\in (\frac{1}{k}-\epsilon,\frac{1}{k}+\epsilon)$ and $x_2^{(n)}\in (\frac{1}{k+1}-\epsilon,\frac{1}{k+1}+\epsilon)$, i.e., $x_i^{(n)}>0$ for $i=1,2$ and all $n\geq n_0$. This implies that for all $n\geq n_0, \langle x^{(n)},x^{(n+1)}\rangle \neq 0$
which contradicts the orthogonality of the sequence $\{x^{(n)}\}_{n\in \mathbb{N}}$. Hence our assumption was wrong.
\end{proof}
Therefore any $O$-sequence $\{x^{(n)}\}_{n\in \mathbb{N}}$ converging to $z=(x,y)$, we must have that \[F(x_1^{(n)},x_2^{(n)})=(0,0)=F(x,y).\] for all $n\in \mathbb{N}$. This shows that $F$ is $O$-continuous at $z=(x,y)$. This implies $F$ is also $O$-continuous at $\theta=(0,0)$.\\
Next, we consider a sequence $\{y^{(n)}\}_{n\in \mathbb{N}}$ where $y^{(n)}=(\frac{1}{n},\frac{1}{n+1}),\forall n\in \mathbb{N}$. It is clear from $(A)$ that this sequence is not an $O$-sequence. Also the sequence  $\{y^{(n)}\}_{n\in \mathbb{N}}$  converges to $\theta=(0,0)$ as $n\to \infty$ but \[\lim_{n\to \infty}F(y_1^{(n)},y_2^{(n)})=(\frac{1}{2},0)\neq (0,0),\] i.e., $F$ is not continuous at $\theta=(0,0)$. 
\end{example}
In the standard inner product space $\mathbb{R}^n$, one can consider the following $O$-continuous function which is not continuous.
It is easy to check that the function $F:\mathbb{R}^n \to \mathbb{R}^n$ defined by 
\[ F(x_1,\dots,x_n) = \left\{ \begin{array}{ll}
\big(\frac{x_1x_2}{x_1^2+x_2^2},0,\dots,0\big), & { (x_1,x_2,x_3,\dots,x_n)=(\frac{1}{n},\frac{1}{n+1},0,\dots,0),n\in \mathbb{N}};\\
 (0,\dots,0), & {otherwise}.\end{array} \right. \]
is $O$-continuous at $\theta=(0,0, \dots,0)\in \mathbb{R}^n$ but it is not a continuous function.\\
Therefore, in general we can conclude that in arbitrary inner product spaces an $O$-continuous function may not be a continuous function.\\

\noindent{\textbf{Acknowledgement:}} This research work was funded by Institute Post Doctoral Programme (Project No.-IITG/R\&D/IPDF/2017-2018/MA01) of Indian Institute of Technology Guwahati, India. 
The author conveys her sincere thanks to Professor Arup Chattopadhyay for his valuable discussions and comments on this articlle.
\baselineskip .55 cm

\bibliography{bibfile}
\bibliographystyle{plain}

\end{document}